\newtheorem{theorem}{Theorem}
\newtheorem{proposition}[theorem]{Proposition}
\newtheorem{claim}{Claim}
\theoremstyle{definition}
\newtheorem{definition}{Definition}
\newtheorem{example}{Example}
\newtheorem{problem}{Problem}
\theoremstyle{remark}
\newtheorem{remark}{Remark}
\newenvironment{namelist}[1]{%
\begin{list}{}
  {
   \settowidth{\labelwidth}{#1}
   \setlength{\leftmargin}{2.5\labelwidth}}
}{%
\end{list}}
\def\dfrac#1#2{{\displaystyle\frac{#1}{#2}}}
\begin{document}

\title[birational transformations belonging to Galois points]{Extendable birational transformations belonging to Galois points}

\author{Kei Miura}
\address{Department of Mathematics, National Institute of Technology, Ube College, 
2-14-1 Tokiwadai, Ube, Yamaguchi 755-8555, Japan}
\email{kmiura@ube-k.ac.jp}


\keywords{Galois point, birational transformation, Cremona transformation, de Jonqui\`{e}res transformations}
\subjclass[2020]{Primary 14E07; Secondary 14H50}


\begin{abstract}
We study birational transformations belonging to Galois points. 
Let $P$ be a Galois point for a plane curve $C$ and $G_P$ be a  
Galois group at $P$. 
Then an element of $G_P$ induces a birational transformation of $C$. 
In general, it is difficult to determine when this birational transformations can be extended to a Cremona 
(or projective) transformation. 
In this article, we shall prove that if the Galois group is isomorphic to the cyclic group of 
order three, 
then any element of the Galois group has an expression as a de Jonqui\`{e}res transformation. 
In particular, they can be extended to Cremona transformations. 
\end{abstract}

\maketitle


\section{Introduction}
Let $k$ be the field of complex numbers ${\mathbb C}$. 
We fix it as the ground field of our discussion. 
First, we briefly recall the notion of Galois points. 
Let $C$ be an irreducible (possibly singular) curve in ${\mathbb P}^2$ of degree $d$ $(d \geq 3)$, 
and $k(C)$ be the function field of $C$. 
Taking a point $P$ of ${\mathbb P}^2$, we consider a projection 
$\pi_P : C \dashrightarrow {\mathbb P}^1$, 
which is the restriction of the projection 
${\mathbb P}^2 \dashrightarrow {\mathbb P}^1$ with the center $P$. 
Then, we obtain a field extension induced by $\pi_P$, 
i.e., $\pi_P^* : k({\mathbb P}^1) \hookrightarrow k(C)$. 
We remark that $\deg \pi_P^* = d - m_P$, where $m_P$ is the multiplicity of $C$ at $P$ 
(we put $m_P = 0$, if $P \notin C$). 
By putting $K_P = \pi_P^*(k({\mathbb P}^1))$, we have the following definition.

\begin{definition}
The point $P$ is called a {\em Galois point} for $C$ if the field extension $k(C) / K_P$ 
is Galois. 
Then we put $G_P = {\rm Gal} (k(C)/ K_P)$ and call a {\em Galois group at $P$}. 
\end{definition}

The notion of Galois point for $C$ was introduced by Yoshihara (cf. \cite{m-y1}) 
to study the structure of the field extension $k ( C ) / k$ from geometrical viewpoint.

We have studied Galois points for some cases (cf. \cite{miura1}, \cite{miura4}, \cite{miura5}, 
\cite{m-y1}, \cite{yoshi}, \cite{yoshi2}, \cite{yoshi3}, etc.). 
For example, we determined the number of the Galois points for $C$ 
and found the characterization of $C$ having Galois points. 
On the other hand, in the case where $P$ is not a Galois point, 
we determined the monodromy group and constructed the variety 
corresponding to the Galois closure.

\medskip

Hereafter, we suppose that $P$ is a Galois point for $C$. 
The objective of this article is to investigate the action of $G_P$ on $C$. 
Specifically, our problem is stated as follows. 
Let $\sigma$ be an element of $G_P$. 
Then, $\sigma$ induces a birational transformation of $C$ over ${\mathbb P}^1$, 
which we denote by the same letter $\sigma$. 
We call $\sigma$ a {\em birational transformation belonging to Galois point} $P$.

\begin{problem}
Study the properties of $\sigma$. 
\end{problem}

Concerning this problem, we first mention the following theorem. 

\medskip

\noindent
{\bf Theorem A} (Yoshihara \cite{yoshi}, \cite{yoshi2}){\bf .} 
{\em 
Let $V$ be a smooth hypersurface in ${\mathbb P}^n$ of degree $d$ $(d \geq 4)$. 
Then, every element $\sigma \in G_P$ is a restriction of a projective transformation of ${\mathbb P}^n$. 
In other words, there exists $\widetilde{\sigma} \in {\rm PGL} (n + 1, k)$ such that $\widetilde{\sigma} |_V = \sigma$. 
Furthermore, $G_P$ is a cyclic group. 
}

\medskip

The conditions of smoothness and $d \geq 4$ are both required for the theorem to hold. 
However, in the case where $C$ has a singular point, 
$G_P$ is not necessarily cyclic nor can its element always be extended to a projective 
transformation (cf.\ \cite{miura5}, \cite{yoshi}). 
In particular, in \cite{miura5}, we constructed examples such that 
$G_P$ could not be extended to a Cremona transformation 
by applying to Iitaka's L-minimality. 
It seems to be an important problem to know if an element can be extended to a birational 
transformation of ${\mathbb P}^2$. 
But, this problem has remained open for several years. 
Furthermore, Yoshihara \cite{yoshi3} gave a negative answer in general by presenting 
several examples.

Although a special case, we have the following theorem for a smooth cubic hypersurface. 
Then we note that any point on a cubic hypersurface becomes a Galois point such that 
the Galois group is isomorphic to the cyclic group of order two.

\noindent
{\bf Theorem B} (Miura \cite{miura6}){\bf .} 
{\em 
Suppose that $V$ is a smooth cubic hypersurface in ${\mathbb P}^n$. 
Then the birational transformation belonging to $P \in V$ is a restriction of a projective transformation of ${\mathbb P}^n$ 
if and only if $P$ is an Eckardt point. 
On the contrary, if not, then every element of the Galois group at $P \in V$ can be expressed as a 
birational transformation of ${\mathbb P}^n$. 
}

\medskip

In this article, we study the problem for the case where $G_P$ is isomorphic to 
${\mathbb Z}_3$; the cyclic group of order three. In this case, we can prove that an element of $G_P$ can be extended to a birational 
transformation of ${\mathbb P}^2$. 
Precisely, our results are stated as follows.

\begin{theorem}\label{main}
Let $C$ be an irreducible (possibly singular) plane curve of degree $d$ $(d \geq 3)$. 
Suppose that $P$ is a Galois point for $C$ and its Galois group $G_P$ is isomorphic to 
${\mathbb Z}_3$. 
Then, any element of $G_P$ has an expression as a de Jonqui\`{e}res transformation. 
In particular, any element of $G_P$ can be extended to a Cremona transformation of ${\mathbb P}^2$. 
\end{theorem}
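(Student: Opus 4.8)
The plan is to realise $\sigma$ fibrewise as a family of M\"{o}bius transformations and to show, using the hypothesis $G_P\cong{\mathbb Z}_3$, that this family is defined over the base. Choose homogeneous coordinates $(X:Y:Z)$ on ${\mathbb P}^2$ with $P = (0:0:1)$ and $\pi_P\colon (X:Y:Z)\mapsto (X:Y)$. Writing $t = X/Y$ for the coordinate on the base ${\mathbb P}^1$ and $u = Z/Y$ for the coordinate along the fibres (the lines through $P$), we have $k({\mathbb P}^2) = k(t,u)$ and $K_P = k(t)$, while $C$ is cut out on the generic fibre by a polynomial $g_t(u)\in k(t)[u]$, which we normalise to be monic of degree $d - m_P = 3$. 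Thus $k(C) = k(t)[u]/(g_t(u))$ is a degree-three extension of $k(t)$, and it is Galois with group $G_P$ because $P$ is a Galois point; in particular $g_t$ is irreducible over $k(t)$ and $k(C)$ is its splitting field, so the Galois group of $g_t$ is exactly $G_P\cong{\mathbb Z}_3$. The element $\sigma$ permutes the three roots $u_1,u_2,u_3$ of $g_t$ cyclically, and it suffices to produce a single M\"{o}bius transformation
$$ M_t(u) = \frac{a(t)\,u + b(t)}{c(t)\,u + d(t)}, \qquad a,b,c,d \in k(t), $$
of the generic fibre inducing this cyclic permutation; for then $\Phi(t,u) = (t,M_t(u))$ is a fibre-preserving birational self-map of ${\mathbb P}^2$, that is, a de Jonqui\`{e}res transformation attached to the pencil of lines through $P$, and by construction $\Phi|_C = \sigma$.

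The next step is to construct $M_t$ and track the field of definition of its coefficients. Imposing $M_t(u_1)=u_2$, $M_t(u_2)=u_3$, $M_t(u_3)=u_1$ yields the homogeneous linear system whose $i$-th equation is $a\,u_i + b - c\,u_i u_{i+1} - d\,u_{i+1} = 0$ (indices modulo three); generically the solution space is one-dimensional, and by Cramer's rule $(a,b,c,d)$ is given, up to a common scalar, by the $3\times 3$ minors of the associated $3\times 4$ matrix whose rows are indexed by $u_1,u_2,u_3$. Applying the cyclic permutation $u_1\to u_2\to u_3\to u_1$ to the entries shifts the three rows cyclically, an even permutation, so each minor is left genuinely invariant. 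Hence the coefficients of $M_t$ are polynomials in $u_1,u_2,u_3$ invariant under the alternating group $A_3$, and therefore lie in $k(e_1,e_2,e_3,\Delta)$, where $e_1,e_2,e_3$ are the elementary symmetric functions (the coefficients of $g_t$, hence elements of $k(t)$) and $\Delta = (u_1-u_2)(u_2-u_3)(u_3-u_1)$ is a square root of the discriminant of $g_t$.

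The decisive step, and the point at which the hypothesis $G_P\cong{\mathbb Z}_3$ is used, is that $\Delta$ already lies in $k(t)$. By the classical criterion for cubics, the Galois group of the irreducible $g_t$ over $k(t)$ equals $A_3$ rather than the full symmetric group precisely when the discriminant is a square in $k(t)$; since that Galois group is $G_P\cong{\mathbb Z}_3 = A_3$, the discriminant is a square and $\Delta\in k(t)$. Consequently $a,b,c,d\in k(e_1,e_2,e_3,\Delta) = k(t)$, so $M_t$ is defined over $K_P$ and varies rationally with $t$. Clearing denominators then exhibits $\Phi$ as an explicit Cremona transformation of ${\mathbb P}^2$ in the coordinates $(X:Y:Z)$ preserving the lines $X/Y = \mathrm{const}$ through $P$; its inverse is the de Jonqui\`{e}res transformation built from $M_t^{-1}$, so $\Phi$ is genuinely birational, and $\Phi|_C = \sigma$. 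Since the identity of $G_P$ corresponds to the identity transformation, every element of $G_P$ extends.

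I expect the main obstacle to be this field-of-definition analysis of $M_t$: verifying that the minor coefficients are invariant (and not merely invariant up to sign) under the relevant $3$-cycle, and rigorously identifying $A_3$-invariance with membership in $k(e_1,e_2,e_3,\Delta)$, so that the discriminant criterion can be brought to bear. Some care is also needed at the finitely many values of $t$ where $g_t$ degenerates, namely where it acquires a repeated root or where its leading coefficient vanishes; but since a rational map is determined by its behaviour on the generic fibre, these special fibres do not affect the construction of $\Phi$.
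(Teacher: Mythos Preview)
Your argument is correct and reaches the same destination as the paper---namely, that the generator $\sigma$ of $G_P$ is realised by an element of $\mathrm{PGL}(2,K_P)$, whence the Pan--Simis description of $\ker\rho$ furnishes the extending de Jonqui\`{e}res map---but the route is genuinely different. The paper invokes Kummer theory: since $G_P\cong\mathbb{Z}_3$ and $k=\mathbb{C}$, one writes $k(C)=K_P(\sqrt[3]{q})$ and expands the fibre coordinate as $x=c_0+c_1\sqrt[3]{q}+c_2(\sqrt[3]{q})^2$; the two-term tail $c_1\sqrt[3]{q}+c_2(\sqrt[3]{q})^2$ is (trivially) a geometric progression, and summing it as such expresses $x$ as a M\"{o}bius function of $\sqrt[3]{q}$, so that $\sigma(x)$ becomes a M\"{o}bius function of $x$ with coefficients in $K_P$. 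Your argument avoids Kummer theory altogether, instead interpolating the M\"{o}bius map through the three roots of the generic-fibre cubic and using the discriminant criterion (Galois group $A_3$ $\Leftrightarrow$ discriminant a square) to force the Cramer minors---which are $A_3$-invariant because the cyclic shift of roots permutes the rows evenly---down into $k(t)$. The trade-off: the paper's computation yields an explicit formula and, via its Proposition, a sufficient condition (the geometric-sequence hypothesis on the Kummer coefficients) that applies to cyclic $G_P$ of any order; your argument is cleaner and more conceptual for $n=3$ specifically, leaning on a classical fact about cubics, but does not obviously extend to higher $n$.
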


\begin{remark}
As long as $G_P \cong {\mathbb Z}_3$, the point $P$ can be $P \in C$ or $P \in {\mathbb P}^2 \setminus C$. 
A Galois point $P$ is called {\em inner} (resp. {\em outer}) if $P \in C$ (resp. $P \in {\mathbb P}^2 \setminus C$). 
\end{remark}

\medskip

The Cremona group ${\rm Bir} ({\mathbb P}^2)$ is a classical object in algebraic geometry. 
Hence, there are so many results on ${\rm Bir} ({\mathbb P}^2)$. 
However, we have obtained very few results on birational transformations belonging to Galois points (e.g.,  \cite{miura4}, \cite{miura5}, \cite{yoshi3}). 
In particular, it is difficult to determine when a birational transformation extends to a Cremona transformation.


\section{de Jonqui\`{e}res transformations}
In this section, by referring to Pan and Simis \cite{pansim}, we recall the de Jonqui\`{e}res transformations. 
From \cite{pansim},  
a de Jonqui\`{e}res map is a plane Cremona map $\frak F$ of degree $d \geq 2$ satisfying any one of 
the following equivalent conditions:

\begin{namelist}{111}
 \item[{\rm (i)}] $\frak F$ has homaloidal type $(d ; d - 1, 1^{2d - 2})$. 
 \item[{\rm (ii)}] There exists a point $o \in {\mathbb P}^2$ such that the restriction of $\frak F$ to a general line passing 
 through $o$ maps it birationally to a line passing through $o$. 
 \item[{\rm (iii)}] Up to projective coordinate change (source and target), $\frak F$ is defined by $d$-forms $\left\{ q x_0, q x_1, f \right\}$
 such that $f$, $q \in k [x_0, x_1, x_2]$ are relatively prime $x_2$-monoids, one of which at least has degree $1$ in $x_2$. 
\end{namelist}

Let $J_o ({\mathbb P}^2) \subset {\rm Bir} ({\mathbb P}^2)$ be the subgroup of all de Jonqui\`{e}res transformation 
with center $o$. 
Putting $o = (0:0:1)$, we obtain an explicit form of de Jonqui\`{e}res transformation by the third alternative 
as follows:
$$(x_0 : x_1 : x_2) \mapsto (q x_0 : q x_1 : f), $$
where $f = a(x_0, x_1)x_2 + b(x_0, x_1)$, $q = c(x_0, x_1)x_2 + d(x_0, x_1)$ 
such that $a, b, c, d \in k[x_0, x_1]$ and $ad - bc \ne 0$. 
We note that the fraction $f/q = (a(x_0, x_1)x_2 + b(x_0, x_1))/(c(x_0, x_1)x_2 + d(x_0, x_1))$ defines a 
M\"{o}bius transformation in the variable $x_2$ over the function field of ${\mathbb P}^1$.

Let $H$ be a line defined by $x_2 = 0$. 
Note that the projective space of line passing through $o$ is identified with $H$. 
By definition, we infer that de Jonqui\`{e}res transformation $\frak F \in J_o ({\mathbb P}^2)$ induces a 
birational map $H \dashrightarrow H$. 
By identifying $H = {\mathbb P}^1$, we obtain a group homomorphism
$$\rho : J_o ({\mathbb P}^2) \rightarrow {\rm Bir} ({\mathbb P}^1).$$

Now, we recall the following two objects:
\begin{itemize}
 \item The element $\frak{f}_{a,b,c,d} : = (a x_2 + b) / (c x_2 + d) \in k [x_0, x_1] (x_2).$
 \item The rational map $\frak{F}_{a,b,c,d}: {\mathbb P}^2 \dashrightarrow {\mathbb P}^2$ defined by 
 $((c x_2 + d)x_0 : (c x_2 + d)x_1 : a x_2 + b). $
\end{itemize}

Then, Pan and Simis obtained the following theorem.

\medskip

\noindent
{\bf Theorem C} (Pan and Simis \cite{pansim}, Proposition 1.2){\bf .}  
{\em 
Let $\rho : J_o ({\mathbb P}^2) \rightarrow {\rm Bir} ({\mathbb P}^1)$ be as above. 
Fix $o = (0:0:1)$ and $H = \{ x_2 = 0 \}$. 
We denote $K = k(H)$ the function field of $H$. 
Then, 

\begin{namelist}{11}
 \item[{\rm (i)}] A Cremona map $\frak F \in {\rm Bir} ({\mathbb P}^2)$ belongs $J_o ({\mathbb P}^2)$ if and only if as a rational map 
 ${\mathbb P}^2 \dashrightarrow {\mathbb P}^2$ it has the form $(q g_0 : q g_1 : f)$, where $(g_0 : g_1)$ defines a Cremona map of $H \cong {\mathbb P}^1$ 
 and $q$, $f \in k[x_0, x_1, x_2]$ are relatively prime $x_2$-monoids at least one of which has positive $x_2$-degree.   
 \item[{\rm (ii)}] The group ${\rm PGL} (2, K)$ can be identified with the M\"{o}bius group whose elements have the form $\frak{f}_{a,b,c,d}$. 
 \item[{\rm (iii)}] The map $\frak{f}_{a,b,c,d} \mapsto \frak{F}_{a,b,c,d}$ is an injective group homomorphism $\psi : {\rm PGL} (2, K) \hookrightarrow J_o ({\mathbb P}^2)$. 
 \item[{\rm (iv)}] ${\rm Im} ( \psi ) = {\rm Ker} ( \rho )$; in particular $\frak{F}_{a,b,c,d}$ maps a general line passing through the point $o = (0:0:1)$ birationally to itself. 
\end{namelist}
}

\medskip


\section{Proof of Theorem 1}
We prove our main theorem. 
Our strategy is as follows. 
Suppose that $P$ is a Galois point for $C$ and its Galois group $G_P$ is isomorphic to ${\mathbb Z}_3$, 
that is $G_P = {\rm Gal} (k(C) / K_P) \cong {\mathbb Z}_3$. 
Then we shall prove that $G_P \ni \sigma$ has a representation as an element of ${\rm PGL} (2, K_P)$. 
Hence, by Theorem C, we get our result.

Here, we recall our notation. 
Let $f(x, y) = 0$ be the defining equation of $C$ and $P = (0, 0)$. 
Let $l_t$ be the line $y = t x$. 
Then we may assume that the projection $\pi_P$ is defined as $\pi_P (C \cap l_t) = t$. 
Moreover we put $\widehat{f}(x, t) = f (x, tx) / x^m$, where $m = m_P$ is the multiplicity of $C$ 
at $P$. 
Then $k(C) = k(x, t)$ and $K_P = k(t)$, the extension $k(C) / K_P$ is given by $\widehat{f}(x, t) = 0$.

Now, more generally, suppose that $G_P$ is isomorphic to ${\mathbb Z}_n$, the cyclic group of order $n$ 
for some integer $n$ ($n \geq 3$).
Then, by our assumption, we may take $k(C) = 2K_P (\sqrt[n]{q})$ for some $q \in K_P$. 
Then, it is clear that $\sigma (\sqrt[n]{q}) = \zeta \sqrt[n]{q}$ for some $\sigma$ satisfying 
$\langle \sigma \rangle = G_P$, 
where $\zeta$ is a primitive $n$-th root of unity. 
Under this situation, we prove the following proposition.

\begin{proposition}\label{pro}
Let $x$ be an element of $K_P (\sqrt[n]{q})$ such that 
$x = c_0 + c_1 \sqrt[n]{q} + c_2 (\sqrt[n]{q})^2 + \cdots + c_{n-1} (\sqrt[n]{q})^{n-1}$, 
where $c_0$, $c_1$, $\ldots$, $c_{n -1 } \in K_P$.   
If $c_1$, $c_2$, $\ldots$, $c_{n-1}$ forms a geometric sequence, 
then $\sigma (x)$ is expressed as a M\"{o}bius transformation $M (x)$ for some $M \in {\rm PGL}(2, K_P)$.  
\end{proposition}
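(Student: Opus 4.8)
The plan is to realize $x$ as the image of $\sqrt[n]{q}$ under a single Möbius transformation defined over $K_P$, and then to transport the action of $\sigma$ across this map by conjugation. Throughout write $w := \sqrt[n]{q}$, so that $w^n = q \in K_P$, the automorphism $\sigma$ fixes every element of $K_P$, and $\sigma(w) = \zeta w$ with $\zeta$ a primitive $n$-th root of unity.

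First I would feed in the geometric-sequence hypothesis. Let $r \in K_P$ be the common ratio, so $c_i = c_1 r^{\,i-1}$ for $1 \le i \le n-1$. Substituting and factoring out $c_1 w$ turns the tail of $x$ into a finite geometric series in $rw$:
\[ x - c_0 = c_1 w \sum_{j=0}^{n-2} (rw)^j = c_1 w \cdot \frac{1 - (rw)^{n-1}}{1 - rw}. \]
I would then use the defining relation $w^n = q$ to rewrite $(rw)^{n-1} = r^{n-1} w^{n-1} = r^{n-1} q / w$, which collapses the expression to
\[ x = c_0 + c_1 \cdot \frac{w - r^{n-1} q}{1 - r w} =: \mu(w), \]
exhibiting $x$ as a Möbius transformation $\mu$ of $w$ whose coefficients lie in $K_P$.

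The step I expect to be the main obstacle is verifying that $\mu$ genuinely belongs to ${\rm PGL}(2, K_P)$, i.e.\ that it is nondegenerate. A routine computation gives the determinant of the associated matrix as $c_1(1 - r^n q)$. The factor $c_1$ is nonzero precisely when $x \notin K_P$ (in the degenerate case $c_1 = 0$ all higher terms vanish, so $x = c_0 \in K_P$ and $\sigma(x) = x$ is trivially Möbius). For the factor $1 - r^n q$ I would invoke that $[k(C) : K_P] = n$ forces $X^n - q$ to be irreducible over $K_P$; since ${\mathbb C} \subset K_P$ contains the $n$-th roots of unity, $q$ cannot be an $n$-th power in $K_P$, so $r^n q \ne 1$ for every $r \in K_P$. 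Hence the determinant is nonzero and $\mu$ is invertible over $K_P$.

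Finally I would conjugate. Because the coefficients of $\mu$ lie in $K_P$ and are therefore $\sigma$-invariant, applying $\sigma$ to $x = \mu(w)$ yields $\sigma(x) = \mu(\sigma(w)) = \mu(\zeta w)$. Writing $N \in {\rm PGL}(2, K_P)$ for the scaling map $u \mapsto \zeta u$ and substituting $w = \mu^{-1}(x)$, I obtain
\[ \sigma(x) = \mu\bigl(N(\mu^{-1}(x))\bigr) = M(x), \qquad M := \mu N \mu^{-1} \in {\rm PGL}(2, K_P), \]
which is the claimed conclusion. Apart from the nondegeneracy check above, the remaining ingredients are only the summation of the geometric series and the formal observation that $\mu$-conjugation preserves ${\rm PGL}(2, K_P)$.
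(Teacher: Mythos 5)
Your proof is correct and follows essentially the same route as the paper: both sum the geometric series to exhibit $x = \mu(\sqrt[n]{q})$ for a M\"{o}bius transformation $\mu$ with coefficients in $K_P$, then apply $\sigma$ (which scales $\sqrt[n]{q}$ by $\zeta$) and invert $\mu$ to realize $\sigma(x)$ as the conjugate M\"{o}bius transformation $M = \mu N \mu^{-1}$ of $x$. Your explicit nondegeneracy check --- the determinant $c_1(1 - r^n q) \ne 0$ via irreducibility of $X^n - q$, and the degenerate case $c_1 = 0$ --- is a point the paper leaves implicit, but it does not constitute a different approach.
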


\begin{proof}
By the property of a geometric series, 
$$c_1 \sqrt[n]{q} + c_2 (\sqrt[n]{q})^2 + \cdots + c_{n-1} (\sqrt[n]{q})^{n-1}$$
$$= \frac{c_1 \sqrt[n]{q} (1 - (\frac{c_2}{c_1} \sqrt[n]{q})^{n - 1})}{1 - \frac{c_2}{c_1} \sqrt[n]{q}}$$
$$= \frac{c_1 \sqrt[n]{q} - c_1 \frac{c_2^{n-1}}{c_1^{n-1}} q}{1 - \frac{c_2}{c_1} \sqrt[n]{q}}$$
$$= \frac{c_1^{n - 1} \sqrt[n]{q} - c_2^{n-1} q}{c_1^{n - 2} - c_1^{n - 3} c_2 \sqrt[n]{q}}.$$
Therefore, 
$$x = c_0 + \frac{c_1^{n - 1} \sqrt[n]{q} - c_2^{n-1} q}{c_1^{n - 2} - c_1^{n - 3} c_2 \sqrt[n]{q}}$$
$$= \frac{(c_1^{n - 1} - c_0 c_1^{n - 3} c_2) \sqrt[n]{q} + c_0 c_1^{n - 2} - c_2^{n - 1}q}
{- c_1^{n - 3} c_2 \sqrt[n]{q} + c_1^{n - 2}},$$
so that 
$$\sigma (x) = \frac{(c_1^{n - 1} - c_0 c_1^{n - 3} c_2) \zeta \sqrt[n]{q} + c_0 c_1^{n - 2} - c_2^{n - 1}q}
{- c_1^{n - 3} c_2 \zeta \sqrt[n]{q} + c_1^{n - 2}}. $$
Conversely, 
$$\sqrt[n]{p} = \frac{c_1^{n - 2} x - c_0 c_1^{n - 2} + c_2^{n - 1}q}
{c_1^{n - 3} c_2 x + c_1^{n - 1} - c_0 c_1^{n - 3} c_2}.$$
Hence we infer that $\sigma (x)$ is expressed as a M\"{o}bius transformation 
$M (x)$ for some $M \in {\rm PGL}(2, K_P)$.
\end{proof}

In particular, $\sigma$ can be extended to a Cremona transformation by Theorem C. 
In the case where $n = 3$, then $c_1$ and $c_2$ always forms a geometric sequence. 
Therefore, we obtain our main result. 


\section{Examples}
We give some examples.

\begin{example} 
Let $C$ be a curve defined by
$5 x^{12} - 16 x^{13} + 18 x^{14} - 8 x^{15} + x^{16} - 6 x^8 y^4 + 12 x^9 y^4 - 6 x^{10} y^4 + 4 x^4 y^8 - 4 x^5 y^8 - y^{12} = 0$. 
In fact, $C$ is birationally equivalent to the quartic Fermat curve $x^4 + y^4 + 1 = 0$, 
since $k(C) = k(t) (\sqrt[4]{t^4 + 1})$. 
Indeed, we can obtain the defining equation of $C$ by 
$x = 2 + q + q^2 + q^3$, where $q = \sqrt[4]{t^4 + 1}$ and 
$t = y / x$. 
The point $P = (0, 0) \in C$ is a singular point of multiplicity $12$, 
so the degree of $\pi_P$ is $16 - 12 = 4$. 
By the construction, it is clear that $P$ is a Galois point and its Galois group $G_P$ is 
isomorphic to the cyclic group of order four. 
We remark that the above $x$ satisfies the condition of Proposition \ref{pro}. 
Actually, for $\sigma \in G_P$, 
$$\sigma (x) = \frac{- \zeta q + 1 - t^4}{- \zeta q + 1}. $$
Hence we infer that $\sigma$ is represented as the following M\"{o}bius transformation 
$$M = \left(\begin{array}{cc}
   - \zeta & 1 - t^4  \\
   - \zeta & 1         
  \end{array}\right) \in {\rm PGL}(2, K_P).$$
In particular, $\sigma$ can be extended to a Cremona transformation by Theorem C. 
\end{example}

\begin{example}
Suppose that $C$ is a plane quartic curve with a simple cusp of multiplicity three. 
Then, it is well known that $C$ is projectively equivalent 
to one of the following (cf.\ \cite{namba1}):

\begin{namelist}{(a)}
 \item[{\rm (a)}] $X^4 - X^3 Y + Y^3 Z = 0$,
 \item[{\rm (b)}] $X^4 - Y^3 Z = 0$.
\end{namelist}

In the case of (a), $C$ has two flexes of order one. 
Indeed $Q_1 = (0 : 1 : 0)$ and $Q_2 = (8 : 16 : 3)$ are flexes. 
Then, the tangent lines at these flexes meet $C$ at $P_1 = (1 : 1 : 0)$ and 
$P_2 = (8 : -16 : 3)$, respectively. 
We can easily check that $\pi_{P_i}$ gives a totally ramified triple covering. 
Hence $P_i$ is a Galois point for $C$ $(i= 1, 2)$.
By the condition of flexes of $C$, there is no other inner Galois point. 
In the case of (b), we have a Galois point $P_3 = (0 : 1 : 0)$, which is a flex of order two. 
Since there is no more flex on $C$, there is no other inner Galois point.

We remark that the curve of type ${\rm (b)}$ has an outer Galois point 
$P_4 = (1 : 0 : 0) \in {\mathbb P}^2 \setminus C$, 
its Galois group is isomorphic to the cyclic group of order four. 
We can easily check that $P_4$ is a unique outer Galois point for $C$ of type ${\rm (b)}$.  
That is, the curve of type ${\rm (a)}$ has two inner Galois points, and 
the curve of type ${\rm (b)}$ has an inner Galois point and an outer Galois point.

The Galois groups at $P_i$ are all isomorphic to the cyclic group of order three, ${\mathbb Z}_3$ 
$(i = 1, 2, 3)$. 
By setting $G_{P_i} = \langle \sigma_i \rangle$ $(i = 1, 2, 3)$, 
we shall show that the transformations $\sigma_1$, $\sigma_2$, and $\sigma_3$ are de Jonqui\`{e}res transformations.

First, we prove the case for type ${\rm (a)}$.

\begin{claim}
Suppose that $C$ is a curve of type ${\rm (a)}$. 
Then, there exists a linear automorphism $A$ of $C$ such that $A (P_1) = P_2$.   
\end{claim}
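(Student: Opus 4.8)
The plan is to produce $A$ explicitly by exploiting the rationality of $C$. Since $C$ is a rational curve (the cusp at $S = (0:0:1)$ is a $(3,4)$-cusp with $\delta$-invariant $3$, matching the arithmetic genus of a plane quartic), I would first parametrize it by projecting from the cusp. Substituting the pencil of lines $Y = tX$ into $F = X^4 - X^3 Y + Y^3 Z$ gives $F(X, tX, Z) = X^3\bigl(X(1-t) + t^3 Z\bigr)$, so the residual intersection yields the parametrization $\phi(t) = (t^3 : t^4 : t-1)$. Under $\phi$ I would then locate the relevant points: the cusp is $S = \phi(0)$, the flexes are $Q_1 = \phi(\infty)$ and $Q_2 = \phi(2)$, and the Galois points are $P_1 = \phi(1)$ and $P_2 = \phi(-2)$.

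Next I would invoke the structural constraints: any linear automorphism of $C$ must fix the unique singular point $S$ and permute the two flexes, and on the normalization ${\mathbb P}^1$ it acts by a M\"{o}bius reparametrization $\mu$. These conditions nearly determine $\mu$. Imposing $\mu(0) = 0$ (fixing the cusp) together with the flex swap $\mu(\infty) = 2$, $\mu(2) = \infty$ forces
\[
\mu(t) = \frac{2t}{t - 2}.
\]
The decisive test is then immediate: $\mu(1) = -2$, so $\mu$ sends the parameter of $P_1$ to that of $P_2$, which is exactly the condition $A(P_1) = P_2$ we are after.

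It then remains to show that this reparametrization is induced by a genuine element of ${\rm PGL}(3, k)$, i.e.\ that $\phi \circ \mu = A \cdot \phi$ for a constant matrix $A$. I would compute $\phi(\mu(t))$, clear denominators by $(t-2)^4$, and obtain the three coordinates
\[
\bigl(8t^4 - 16t^3,\ 16t^4,\ t^4 - 4t^3 + 16(t-1)\bigr);
\]
the point is that each coordinate is a $k$-linear combination of the three functions $t^3$, $t^4$, $t-1$, which are precisely the coordinates of $\phi(t)$. Reading off the coefficients gives
\[
A = \begin{pmatrix} -16 & 8 & 0 \\ 0 & 16 & 0 \\ -4 & 1 & 16 \end{pmatrix},
\]
and a direct substitution confirms $F(A \cdot (X,Y,Z)) = 16^4\, F(X,Y,Z)$, so $A$ preserves $C$. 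Finally $A \cdot (1:1:0) = (-8:16:-3) = (8:-16:3) = P_2$, completing the argument.

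The step I expect to be the main obstacle is the last one, because a M\"{o}bius transformation of the parameter need not extend to a projective transformation of ${\mathbb P}^2$; the real content of the claim is the nontrivial fact that the coordinates of $\phi(\mu(t))$ lie in the linear span of $\{t^3, t^4, t-1\}$. Equivalently, if one attacks the problem head-on by positing an unknown matrix that fixes $S$ and the cuspidal tangent line and then imposing $F \circ A = \lambda F$, the obstruction surfaces as the need for the unwanted monomials $Y^4$ and $XY^3$ to cancel; the parametrization explains why this cancellation is forced and delivers $A$ without solving the overdetermined system by hand.
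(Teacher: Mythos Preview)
Your argument is correct. The matrix you obtain,
\[
A = \begin{pmatrix} -16 & 8 & 0 \\ 0 & 16 & 0 \\ -4 & 1 & 16 \end{pmatrix},
\]
is exactly $-1$ times the matrix the paper writes down, hence the same element of ${\rm PGL}(3,k)$; and your verification that $F\circ A = 16^{4}F$ and $A(P_1)=P_2$ is accurate. (Incidentally, your $\phi(2)=(8:16:1)$ is the genuine second flex; the paper's printed coordinates $(8:16:3)$ for $Q_2$ do not lie on $C$.)

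The route, however, is genuinely different. The paper's proof is pure verification: it states the matrix and invites the reader to check the two conditions. Your proof \emph{derives} the matrix: you parametrize the rational quartic from its cusp, observe that any linear automorphism descends to a M\"{o}bius map on the parameter fixing $t=0$ and permuting the flex parameters, pin down $\mu(t)=2t/(t-2)$ from those constraints, confirm $\mu(1)=-2$, and then check that $\phi\circ\mu$ lands in the linear span of the coordinates of $\phi$. What your approach buys is an explanation of where $A$ comes from and why the overdetermined system $F\circ A=\lambda F$ is consistent; what the paper's approach buys is brevity. One small comment: the sentence ``must \ldots\ permute the two flexes'' is being used as a heuristic to locate a candidate, not as a step in the logical chain --- since you afterwards verify everything directly, this is harmless, but it might read more cleanly if you phrased it as ``we seek $\mu$ fixing $0$ and swapping the flex parameters.''
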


\begin{proof}
By putting $A = \left(
\begin{array}{ccc}
16  & -8  &  0  \\
0   & -16 &  0  \\
4   & -1  &  16 \\
\end{array}
\right)$, we can easily check that $A (C) = C$ and $A (P_1) = P_2$. 
\end{proof}

Therefore, it is sufficient to study on $\sigma_1$. 
We can obtain a concrete representation of $\sigma_1$ as follows.

\begin{claim}
$\sigma_1$ is represented as $(X : Y : Z) \mapsto
(X Y : Y ((\omega - 1) X + \omega Y) : Z ((\omega - 1) X + \omega Y))$, 
up to projective coordinate change, where $\omega$ is a primitive cubic root of unity.  
\end{claim}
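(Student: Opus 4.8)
The plan is to compute $\sigma_1$ explicitly in the function field $k(C)$ of the type-(a) curve $C\colon X^4 - X^3 Y + Y^3 Z = 0$, and then translate that field-theoretic formula into a map on $\mathbb{P}^2$. Recall from Section~3 that the Galois point $P_1 = (1:1:0)$ is the intersection of the tangent line at the flex $Q_1$ with $C$, and that $\pi_{P_1}$ is a totally ramified triple cover, so $G_{P_1}\cong\mathbb{Z}_3$. First I would choose affine coordinates adapted to $P_1$: since the center of the projection must sit at the origin, I set up an affine chart $(x,y)$ centered at $P_1$ together with a parameter $t$ for the pencil of lines through $P_1$, and express $k(C) = K_{P_1}(\sqrt[3]{q})$ for an appropriate $q \in K_{P_1} = k(t)$ following the recipe in Section~3 (compute $\widehat f(x,t) = f(x,tx)/x^{m}$ and read off the radical). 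Because $C$ is a quartic and $P_1$ has the right multiplicity, the extension is a cyclic cubic and the generator $\sigma_1$ acts by $\sqrt[3]{q}\mapsto\omega\sqrt[3]{q}$.

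Next I would identify the projective coordinate $X,Y,Z$ (suitably dehomogenized) as an element of $K_{P_1}(\sqrt[3]{q})$ of the form $c_0 + c_1\sqrt[3]{q} + c_2(\sqrt[3]{q})^2$. By the final line of the proof of Theorem~\ref{main}, when $n=3$ the coefficients $c_1, c_2$ automatically form a geometric sequence, so Proposition~\ref{pro} applies and $\sigma_1$ acts on this coordinate as a M\"obius transformation $M\in\mathrm{PGL}(2,K_{P_1})$. The point of the claim, however, is not merely existence but the \emph{concrete normal form}, so I would then invoke Theorem~C(iii)--(iv): the M\"obius transformation $\frak f_{a,b,c,d}$ corresponds to the de Jonqui\`eres transformation $\frak F_{a,b,c,d} = ((cx_2+d)x_0 : (cx_2+d)x_1 : ax_2+b)$. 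Substituting the explicit entries $a,b,c,d\in k[x_0,x_1]$ obtained from $M$ and reverting to the homogeneous coordinates $X,Y,Z$ of the original projective frame should yield exactly the displayed formula $(X:Y:Z)\mapsto (XY : Y((\omega-1)X + \omega Y) : Z((\omega-1)X+\omega Y))$.

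The main obstacle will be bookkeeping the two coordinate changes consistently. There are really three coordinate systems in play: the homogeneous frame $(X:Y:Z)$ in which $C$ is given, the affine/projection frame centered at $P_1 = (1:1:0)$ in which the Kummer description $k(C)=K_{P_1}(\sqrt[3]{q})$ and the M\"obius action live, and the standard de Jonqui\`eres frame with $o=(0:0:1)$, $H=\{x_2=0\}$ from Theorem~C. I expect the delicate part to be pinning down the projective transformation sending $P_1$ to the de Jonqui\`eres center $o$ and the line $H$ correctly, so that the abstract $M\in\mathrm{PGL}(2,K_{P_1})$ produces precisely the asserted cubic forms rather than a projectively-equivalent variant. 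Once the frames are aligned, verifying that the claimed map restricts to $\sigma_1$ on $C$ is a direct substitution: one checks that the stated map preserves $C$ and induces $\sqrt[3]{q}\mapsto\omega\sqrt[3]{q}$ on the function field, which reduces to a finite polynomial identity.

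A useful consistency check I would run at the end is that the proposed map has the homaloidal type $(d;d-1,1^{2d-2})$ of a de Jonqui\`eres transformation (condition~(i) of Section~2): the three cubic forms $XY$, $Y((\omega-1)X+\omega Y)$, $Z((\omega-1)X+\omega Y)$ share the common factor structure expected of the $J_o(\mathbb{P}^2)$ normal form, confirming that $\sigma_1$ is genuinely extendable to a Cremona transformation and not merely to an abstract birational map of $C$.
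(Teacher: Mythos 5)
Your outline---pass to coordinates adapted to $P_1$, exhibit $k(C)=K_{P_1}(\sqrt[3]{q})$ as a Kummer extension, and convert the resulting M\"obius action into a plane map---is the right idea, but as written it has a genuine gap: nothing is ever actually computed, and this claim \emph{is} a computation. Every concrete object that the statement is about is left as a placeholder: you never write down the adapted coordinates, the transformed equation of $C$, the radical $q$, the coefficients $c_0,c_1,c_2$, or the matrix $M$, and the passage from $M$ to the displayed forms is carried entirely by the phrase ``should yield exactly the displayed formula.'' Moreover, the instruction ``compute $\widehat f(x,t)=f(x,tx)/x^m$ and read off the radical'' presupposes that $\widehat f$ comes out as a pure cubic in a Kummer-form equation; in general it is just some cubic over $k(t)$, and producing an explicit element whose cube lies in $k(t)$ (equivalently, choosing a frame in which the equation becomes pure) is precisely the substantive step. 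You also flag the alignment of the three coordinate frames as ``the delicate part'' without resolving it. So the proposal is a program for a proof rather than a proof: none of the displayed forms (which, incidentally, are quadrics, not cubics---the map has homaloidal type $(2;1,1,1)$) is ever derived.

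For comparison, the paper's argument is a short direct computation that bypasses Proposition~\ref{pro} and Theorem~C entirely. Change coordinates so that $P_1$ goes to $P_1'=(1:0:0)$; then $C$ becomes $(X+Y)^3Z-X^3Y=0$ and $\pi_{P_1'}$ is $(X:Y:Z)\dashrightarrow(Y:Z)$. With $x=X/Z$, $y=Y/Z$ the equation reads $\left(1+\frac{y}{x}\right)^3=y$, so the Kummer generator is visibly $1+\frac{y}{x}$ (that is, $q=y$), and the generator $\sigma_1'$ of $G_{P_1'}$ satisfies $\sigma_1'\left(1+\frac{y}{x}\right)=\omega\left(1+\frac{y}{x}\right)$. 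Since $\sigma_1'(y)=y$, this single relation solves to $\sigma_1'(x)=\frac{xy}{(\omega-1)x+\omega y}$, and homogenizing via $x=X/Z$, $y=Y/Z$ gives exactly the claimed map. Note also that in these coordinates the de Jonqui\`eres center is $(1:0:0)$, not the Theorem~C center $o=(0:0:1)$; because the claim is asserted only up to projective coordinate change, the frame-alignment problem you anticipated never has to be confronted. If you wish to keep your route through Proposition~\ref{pro} and Theorem~C, the missing content is exactly the computation above: an explicit Kummer generator and the expansion of the coordinate function in powers of $\sqrt[3]{q}$; once those are in hand, the direct solve is both shorter and already yields the normal form.
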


\begin{proof}
By taking a suitable projective coordinate, we may assume that $P_1$ is translated to $P_1' = (1 : 0 : 0)$. 
Then, $\pi_{P_1'}$ is represented as $(X : Y : Z) \dashrightarrow (Y : Z)$ and 
$C$ is defined by $(X + Y)^3 Z - X^3 Y = 0$. 
By putting $x = X / Z$, $y = Y / Z$, we can obtain the field extension induced by $\pi_{P_1'}$ as
\begin{equation*}
 \begin{CD}
  k( C ) @= k(x, y)     \\
  \cup   @. \cup \\
  k({\mathbb P}^1)@= k(y)\\
 \end{CD}
\end{equation*}
where $k(x, y) / k(y)$ is given by the equation $(x + y)^3 - x^3 y = 0$. 
Then, we obtain $\left( \dfrac{x + y}{x} \right)^3 = y$, i.e., $\left( 1 + \dfrac{y}{x} \right)^3 = y$. 
We put $G_{P_1'} = \langle \sigma_1' \rangle$. 
Hence, we infer that $\sigma_1' \left( 1 + \dfrac{y}{x} \right) =  \omega \left( 1 + \dfrac{y}{x} \right)$. 
Thus, we conclude that $\sigma_1' (x) = \dfrac{y x}{\omega x - x + \omega y}$. 
Since $\sigma_1' (x : y: 1) = (\sigma_1' (x) : y : 1)$ and $x = X / Z$, $y = Y / Z$, 
we get the claim. 
We note that $\sigma_1'$ is conjugate to $\sigma_1$ up to projective coordinate change.  
\end{proof}

Next, we prove the case for type ${\rm (b)}$. 
By putting $x = X / Y$, $y = Z / Y$, we obtain the field extension $\pi_{P_3}^*$ as 
$k(x, y) / k(y)$, where $x^3 - y = 0$. 
Then, it is easy to see that $\sigma_3$ is represented as 
$\left(
\begin{array}{ccc}
\omega    & 0  &  0      \\
0         & 1  &  0      \\
0         & 0  &  \omega \\
\end{array}
\right)$.

Finally we mention $P_4$. 
Let $G_{P_4} = \langle \sigma_4 \rangle \cong {\mathbb Z}_4$. 
Then it is easy to check $\sigma_4$ is represented as 
$\left(
\begin{array}{ccc}
\eta      & 0  &  0      \\
0         & 1  &  0      \\
0         & 0  &  1 \\
\end{array}
\right)$, 
where $\eta$ is a primitive 4-th root of unity. 
Thus, we complete the proof of Example 2. 

\end{example}


\section{Problems}

Finally, we raise some problems. 

\begin{problem}
Let $P$ be a Galois point for $C$. 

 \begin{namelist}{111}
  \item[(1)] Find the condition when a birational transformation belonging to $P$ can be extended to 
  a Cremona transformation. 
  \item[(2)] Suppose that a birational transformation belonging to $P$ is extended to a Cremona transformation. 
  Then, is this a de Jonqui\`{e}res transformation?  
 \end{namelist}
\end{problem}


\section{Acknowledgements} 
The author are grateful to Dr. Mitsunori Kanazawa for helpful comments. 
The author was partially supported by JSPS KAKENHI Grant Number JP18K03230.


\end{document}